\newtheorem{thm}{Theorem}
\newtheorem{prop}[thm]{Proposition}
\theoremstyle{remark}
\newtheorem{rem}[thm]{Remark}
\newcommand{\FF}{\mathbb{F}}
\newcommand{\ZZ}{\mathbb{Z}}
\newcommand{\RR}{\mathbb{R}}
\newcommand{\CC}{\mathbb{C}}
\DeclareMathOperator{\swe}{swe}
\begin{document}

\title{An Upper Bound on the Minimum Weight of Type~II $\ZZ_{2k}$-Codes}

\author{Masaaki Harada\thanks{Department of Mathematical Sciences,
Yamagata University, Yamagata 990--8560, Japan and
PRESTO, Japan Science and Technology Agency, Kawaguchi,
Saitama 332--0012, Japan. 
email: mharada@sci.kj.yamagata-u.ac.jp.
This work was partially supported by JSPS KAKENHI (20540103).}
and 
Tsuyoshi Miezaki\thanks{Research Fellow of the Japan Society for 
the Promotion of Science and 
Department of Mathematics, Hokkaido University, 
Hokkaido 060--0810, Japan. email: miezaki@math.sci.hokudai.ac.jp
}
}

\maketitle

\begin{abstract}
In this paper, we give a new upper bound on the minimum 
Euclidean weight of Type~II $\ZZ_{2k}$-codes and 
the concept of extremality for the Euclidean weights
when $k=3,4,5,6$.
Together with the known result, we demonstrate that
there is an extremal Type~II $\ZZ_{2k}$-code of 
length $8m$ $(m \le 8)$ when $k=3,4,5,6$.
\end{abstract}

{\small
\noindent
{\bfseries Key Words:}
 Type~II code, Euclidean weight, extremal code, theta series 

\noindent
2000 {\it Mathematics Subject Classification}. Primary 94B05; Secondary 11F03.\\ \quad
}

\section{Introduction}
Let $\ZZ_{2k}$ be the ring 
of integers modulo $2k$, where $k$ 
is a positive integer.
In this paper, 
we take the set of elements of $\ZZ_{2k}$ to be either 
$\{0,1,\ldots,2k-1\}$ or
$\{0,\pm 1,\ldots,\pm(k-1),k\}$.
A $\ZZ_{2k}$-code $C$ of length $n$
(or a code $C$ of length $n$ over $\ZZ_{2k}$)
is a $\ZZ_{2k}$-submodule of $\ZZ_{2k}^n$.
The Euclidean weight of a codeword 
$x=(x_1,x_2,\ldots,x_n)$ is 
$\sum_{i=1}^n \min\{x_i^2,(2k-x_i)^2\}$.
The minimum Euclidean weight $d_E(C)$ of $C$ is the smallest Euclidean
weight among all nonzero codewords of $C$.

A $\ZZ_{2k}$-code $C$ is {\em self-dual} if $C=C^\perp$ where
the dual code $C^\perp$ of $C$ is defined as 
$C^\perp = \{ x \in \ZZ_{2k}^n \mid x \cdot y = 0$ for all $y \in C\}$
under the standard inner product $x \cdot y$. 
As described in~\cite{RS-Handbook},
self-dual codes are an important class of linear codes for both
theoretical and practical reasons.
It is a fundamental problem to classify self-dual codes
of modest lengths 
and determine the largest minimum weight among self-dual codes
of that length.

A binary doubly even self-dual code is often called Type~II.
For $\ZZ_4$-codes, Type~II codes were first defined 
in~\cite{Z4-BSBM} as self-dual codes containing a 
$(\pm 1)$-vector and with the property that all
Euclidean weights are divisible by eight.
Then it was shown in~\cite{Z4-HSG} that,
more generally, the condition of containing a 
$(\pm 1)$-vector is redundant.
For general $k$,
Type~II $\ZZ_{2k}$-codes were defined in~\cite{BDHO} 
as a self-dual code with the property that all
Euclidean weights are divisible by $4k$.
It is known that a Type~II $\ZZ_{2k}$-code of length $n$ exists 
if and only if $n$ is divisible by eight~\cite{BDHO}.

The aim of this paper is to show the following theorem.

\begin{thm}\label{main1}
Let $C$ be a Type~II $\ZZ_{2k}$-code of length $n$.
If $k \le 6$
then the minimum Euclidean weight $d_E(C)$ of $C$ 
is bounded by
\begin{equation}\label{Eq:B}
d_E(C) \le 4k \Big\lfloor \frac{n}{24} \Big\rfloor +4k.
\end{equation}
\end{thm}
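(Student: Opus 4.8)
The plan is to translate the problem into the language of lattices and modular forms: I would attach to $C$ its Construction~A lattice, observe that Type~II-ness makes this lattice even unimodular, and then read the minimum Euclidean weight off the theta series, which is then a level-one modular form. Concretely, set
\[
\LL=\tfrac{1}{\sqrt{2k}}\{x\in\ZZ^n : (x\bmod 2k)\in C\}.
\]
Self-duality of $C$ and divisibility of all Euclidean weights by $4k$ force $\LL$ to be even and unimodular of rank $n$, so its theta series $\Theta_\LL(\tau)=\sum_{N\ge 0}A_N q^N$, with $A_N$ the number of vectors of norm $2N$, is a modular form of weight $n/2$ for $SL_2(\ZZ)$; since $8\mid n$, the space of such forms has dimension $\lfloor n/24\rfloor+1$. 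A codeword of Euclidean weight $w$ yields vectors of norm $w/(2k)$, while the zero codeword contributes exactly the rescaled lattice $\sqrt{2k}\,\ZZ^n$, whose theta series is $\vartheta_k(\tau)^n$ with $\vartheta_k(\tau)=\sum_{m\in\ZZ}q^{km^2}$. Hence the minimum norm of $\LL$ equals $\min\{d_E(C)/(2k),\,2k\}$.

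I would first dispose of the easy regimes using the classical extremal bound that an even unimodular lattice of rank $n$ has minimum norm at most $2\lfloor n/24\rfloor+2$. If $d_E(C)\le 4k^2$, the minimum norm is $d_E(C)/(2k)$, and the bound gives \eqref{Eq:B} immediately; and if $k>\lfloor n/24\rfloor+1$, the minimum norm is $2k>2\lfloor n/24\rfloor+2$, contradicting the extremal bound, so that regime does not arise at all.

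The substantive case is therefore $d_E(C)>4k^2$ together with $k\le\lfloor n/24\rfloor+1$, where the mandatory short vectors $\sqrt{2k}\,e_i$ dominate the lattice and the lattice bound sees nothing about the code. Here I would argue by contradiction: if \eqref{Eq:B} fails then $d_E(C)\ge 4k(\lfloor n/24\rfloor+2)$ by divisibility, so every nonzero codeword contributes only to coefficients $A_N$ with $N\ge\lfloor n/24\rfloor+2$. Consequently the first $\lfloor n/24\rfloor+2$ coefficients of $\Theta_\LL$ coincide with those of $\vartheta_k^n$. But a weight-$n/2$ form for $SL_2(\ZZ)$ is determined by its first $\lfloor n/24\rfloor+1$ coefficients, so $\Theta_\LL$ must be the unique such form $F$ matching $\vartheta_k^n$ through order $q^{\lfloor n/24\rfloor}$, and then the \emph{forced} value of its next coefficient must still agree with that of $\vartheta_k^n$.

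The crux, and the step I expect to be the main obstacle, is to show that this last agreement fails, i.e.\ that $[q^{\lfloor n/24\rfloor+1}](\vartheta_k^n-F)\ne 0$; equivalently, that $\vartheta_k^n$, which is only a modular form for $\Gamma_0(4k)$, is not congruent modulo $q^{\lfloor n/24\rfloor+2}$ to any level-one form of weight $n/2$. I would verify this for each $k\in\{3,4,5,6\}$ (the cases $k=1,2$ being the known binary and $\ZZ_4$ results) by writing $\vartheta_k$ in terms of the Jacobi theta constants and computing the critical coefficient difference explicitly. The heuristic—and the reason the hypothesis $k\le 6$ is imposed—is that for small $k$ the modest level $4k$ keeps the maximal order of vanishing at the cusp below the threshold $\lfloor n/24\rfloor+1$, so the difference is genuinely nonzero and the contradiction goes through, whereas for larger $k$ the extra room at level $4k$ allows $\vartheta_k^n$ to agree with a level-one form to the critical order and the method breaks down.
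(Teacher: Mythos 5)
Your reduction coincides with the paper's: pass to the Construction~A lattice, note that its theta series lies in the $(\lfloor n/24\rfloor+1)$-dimensional space of weight-$n/2$ modular forms for $SL_2(\ZZ)$, observe that if $d_E(C)$ exceeded the claimed bound the theta series would agree with that of the sublattice $\sqrt{2k}\,\ZZ^n$ to one order beyond what the dimension of that space permits to be prescribed, and derive a contradiction from the nonvanishing of the first forced coefficient. Your two ``easy regimes'' are handled correctly (the paper absorbs them automatically via the coset decomposition: a vector of norm $2(\mu+1)$ outside $\sqrt{2k}\,\ZZ^n$ certifies a codeword of Euclidean weight at most $4k(\mu+1)$, where $\mu=\lfloor n/24\rfloor$).

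The genuine gap is exactly the step you flag as the crux: proving $[q^{\mu+1}](\vartheta_k^n-F)\neq 0$. You propose to ``verify this for each $k\in\{3,4,5,6\}$ by computing the critical coefficient difference explicitly,'' but for each fixed $k$ this is an assertion about \emph{every} length $n\equiv 0\pmod 8$, so there is no finite computation to fall back on; and the heuristic about the level of $\vartheta_k$ and orders of vanishing at cusps is not an argument (nor does it reflect how $k\le 6$ actually enters). This nonvanishing is the entire content of the theorem beyond the known cases, and the paper proves it, with a sign, as follows: writing $\theta_0=\theta_1^{j}$ with $\theta_1=f_0^8$ the theta series of $\sqrt{2k}\,\ZZ^8$, B\"urmann's formula gives $-b_{2(\mu+1)}=[q^{\mu+1}](F-\vartheta_k^n)$ as a coefficient of $E_4^{3s-j-1}(t/\Delta)^{s}\,\theta_1^{j-1}(\theta_1E_4'-\theta_1'E_4)$ with $s=\mu+1$; since $E_4$ and $1/\Delta$ have positive coefficients, it suffices that $f_0^{8j-1}(f_0f_i'-f_0'f_i)$ have positive coefficients up to exponent $\mu$ for $i=1,\dots,k$, and this is shown by symmetrizing the explicit lattice-sum expansion, where positivity of the symmetrized coefficient $((s+2)(1+2ky)^2-l)/4k$ holds precisely when the exponent $l/4k$ is below $(n+1)/4k$ --- which covers all exponents up to $\mu\le n/24$ exactly when $4k\le 24$. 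Without this computation, or a substitute for it, your argument does not establish the theorem.
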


\begin{rem}
The upper bound (\ref{Eq:B})
is  known for the cases $k=1$~\cite{MS73}
and $k=2$~\cite{Z4-BSBM}.
For $k \ge 3$, the bound (\ref{Eq:B}) is known under the
assumption that $\lfloor n/24 \rfloor\leq k-2$~\cite{BDHO}.
\end{rem}

We say that a Type~II $\ZZ_{2k}$-code meeting the bound (\ref{Eq:B})
with equality is {\em extremal} for $k \le 6$.
For the following cases
\[
(k,m)=(4,7),(4,8),(5,6), (5,7),(5,8),(6,4),(6,5),(6,6),
(6,7) \text{ and }(6,8),
\]
an extremal Type~II $\ZZ_{2k}$-code of length $8m$
is constructed for the first time in Section \ref{Sec:code}.
Together with the known results on the existences of extremal
Type II codes,
we have the following theorem.

\begin{thm}\label{Thm:E}
If $k \le 6$ then
there is an extremal Type~II $\ZZ_{2k}$-code of 
length $8m$ for $m \le 8$.
\end{thm}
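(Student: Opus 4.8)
The plan is to prove Theorem~\ref{Thm:E} by exhibiting, for each pair $(k,m)$ with $k \le 6$ and $m \le 8$, a Type~II $\ZZ_{2k}$-code of length $8m$ whose minimum Euclidean weight meets the bound~(\ref{Eq:B}) with equality. Since the upper bound is already established in Theorem~\ref{main1}, the entire content of Theorem~\ref{Thm:E} is an \emph{existence} (construction) statement: it suffices to produce one extremal code in each case. The work thus splits into collecting the cases already known in the literature and constructing the codes for the ten new pairs $(k,m)$ listed in the excerpt.

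First I would tabulate the target minimum Euclidean weight $4k\lfloor m/3\rfloor + 4k$ for each admissible $(k,m)$, since $n=8m$ gives $\lfloor n/24\rfloor = \lfloor m/3\rfloor$. For $m \le 2$ the bound is $4k$, for $3 \le m \le 5$ it is $8k$, for $6 \le m \le 8$ it is $12k$; this clarifies exactly which weight each constructed code must achieve. I would then cite the known constructions: the cases $k=1,2$ are classical (binary Type~II codes such as the extended Hamming, Golay, and related codes, and the $\ZZ_4$ analogues), and the bound with equality for small $\lfloor n/24\rfloor$ is treated in~\cite{BDHO}. This handles all pairs except the ten new ones.

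For the new cases, the natural strategy is explicit construction together with a direct verification that the code is self-dual, Type~II, and has the required minimum Euclidean weight. Concretely, I would present generator matrices (deferred to Section~\ref{Sec:code}) and check three properties for each: (i) self-duality, by verifying $C \subseteq C^\perp$ together with $|C| = (2k)^{4m}$, equivalently that the generator matrix has the correct rank and that its rows are mutually and self-orthogonal under the standard inner product; (ii) the Type~II condition, i.e.\ that every Euclidean weight is divisible by $4k$, which for a self-dual code reduces to checking the generators and can be organized via the Euclidean weight enumerator or the theta series; and (iii) that the minimum Euclidean weight equals the target value, by confirming no nonzero codeword has smaller Euclidean weight. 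Property~(iii) is where the theta series becomes the key computational tool, since the theta series of the associated lattice (via Construction~A$_k$) records the Euclidean weight distribution and lets one certify the minimum weight.

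The main obstacle I expect is verification of the minimum Euclidean weight in (iii) for the larger lengths, where exhaustive search over codewords is infeasible. The practical remedy is to exploit the theta-series/weight-enumerator machinery: for a self-dual code whose associated lattice is even, the theta series lies in a finite-dimensional space of modular forms, so only finitely many low-order coefficients need to be pinned down to certify that the minimum weight attains the bound. Where a direct modular-forms argument is unavailable, I would fall back on structured constructions (for instance building longer codes from shorter extremal ones, or using codes with large automorphism groups so that the Euclidean weight distribution can be computed orbit-by-orbit) to keep the verification tractable. The remaining bookkeeping---recording each generator matrix and its verified parameters---is routine once the constructions are in hand.
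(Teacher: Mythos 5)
Your overall skeleton matches the paper: Theorem~\ref{Thm:E} is indeed reduced to citing the known cases and constructing the ten new pairs $(k,m)$, and for four of them --- $C_{8,56}$, $C_{8,64}$, $C_{10,56}$, $C_{10,64}$ --- the paper does exactly what you propose: explicit double negacirculant generator matrices of the form (\ref{Eq:GM}) with machine verification of self-duality, the Type~II condition, and the minimum Euclidean weight. Where the paper diverges is on the remaining six cases, $(5,6)$ and $(6,4),\ldots,(6,8)$, and the divergence is the real content. Instead of searching for codes directly, the paper passes to lattices: it uses the fact that an even unimodular lattice $L$ contains a $2k$-frame if and only if $L \simeq A_{2k}(C)$ for some Type~II $\ZZ_{2k}$-code $C$, together with the elementary ``frame doubling'' observation that an $\ell$-frame $\{f_1,\ldots,f_n\}$ yields a $2\ell$-frame $\{f_{2i-1}\pm f_{2i}\}$. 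Thus each known extremal Type~II $\ZZ_6$-code $C_{6,8m}$ gives an extremal lattice $A_6(C_{6,8m})$ carrying a $6$-frame, hence a $12$-frame, hence an extremal Type~II $\ZZ_{12}$-code of the same length with no new search at all (extremality of the code follows from extremality of the lattice because $2\lfloor 8m/24\rfloor+2 \le 2k$ in this range); the length-$48$ $\ZZ_{10}$ case is handled similarly via an even unimodular neighbor of $A_5(C_{5,48})$ for an explicit self-dual $\FF_5$-code.

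The gap in your plan is therefore practical rather than logical: for the cases the paper treats by frames --- notably $\ZZ_{12}$-codes of lengths $64$ and $\ZZ_{10}$-codes of length $48$ --- a direct construction-and-verify search of the kind you describe is not known to succeed (the paper reports direct negacirculant constructions only for $C_{12,32}$, $C_{12,40}$, $C_{12,56}$, and none for length $48$ over $\ZZ_{10}$ or length $64$ over $\ZZ_{12}$). Your fallback of ``building longer codes from shorter extremal ones'' does not apply here; the operative trick is building codes over a \emph{larger alphabet} from codes over a smaller one via frames in the common Construction~A lattice. Also, your modular-forms remark for step (iii) is better phrased as the exact statement the paper exploits: since $\min A_{2k}(C)=\min\{2k, d_E(C)/2k\}$ and the extremal norm $2\lfloor n/24\rfloor+2$ does not exceed $2k$ for the relevant $(k,m)$, the code is extremal precisely when its Construction~A lattice is an extremal even unimodular lattice.
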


The existences of a binary extremal Type~II code of length $72$ and a
$72$-dimensional extremal even unimodular (Type~II) lattice
are long-standing open questions.
In this paper, we propose the following new question.

\bigskip\noindent
{\bf Question.}
Is there an extremal Type~II $\ZZ_{2k}$-code of length $72$
for $k \le 6$?
\bigskip

We remark that if there is an Type~II $\ZZ_{2k}$-code of length $72$
$(k=4,5,6)$ then a $72$-dimensional  extremal even unimodular lattice can be obtained
by Construction A.

All computer calculations in this paper
were done by {\sc Magma}~\cite{Magma}.

\section{Preliminaries}

An $n$-dimensional (Euclidean) lattice $\Lambda$ is a subset of
$\RR^{n}$ with the property that there exists a basis 
$\{e_{1}, e_2,\ldots, e_{n}\}$ of $\RR^{n}$ such that 
$\Lambda =\ZZ e_{1}\oplus \ZZ e_{2}\oplus \cdots \oplus\ZZ e_{n}$, i.e., 
$\Lambda$ consists of all integral linear combinations of the 
vectors $e_{1}, e_{2}, \ldots, e_{n}$. 
The dual lattice $\Lambda^*$ of $\Lambda$ is the lattice
$
\{x\in \RR^{n} \mid \langle x,y \rangle \in\ZZ \text{ for all }
y\in \Lambda\}
$
where $\langle x, y \rangle$ is the standard inner product.
A lattice with $\Lambda=\Lambda^{*}$
is called {\em unimodular}.
The norm of a vector $x$ is $\langle x, x \rangle$.
A unimodular lattice with even norms is said to be {\em even}.
A unimodular lattice containing a vector of odd norm is said to be
{\em odd}.
An $n$-dimensional even unimodular lattice exists if and only
if $n \equiv 0 \pmod 8$ while an odd unimodular lattice
exists for every dimension.
The minimum norm $\min(\Lambda)$ of $\Lambda$ is the smallest
norm among all nonzero vectors of $\Lambda$.
For $\Lambda$ and a positive integer $m$, 
the shell $\Lambda_m$ of norm $m$ is defined as
$\{x\in \Lambda \mid \langle x,x \rangle=m \}$.
Two lattices $L$ and $L'$ are {\em isomorphic}, denoted $L \simeq L'$,
if there exists an orthogonal matrix $A$ with
$L' = L \cdot A=\{xA \mid x \in L\}$.
Two lattices $L$ and $L'$ are {\em neighbors} if
both lattices contain a sublattice of index $2$
in common.

The theta series $\Theta_{\Lambda}(q)$ of $\Lambda$ is the 
following formal power series
\[
\Theta_{\Lambda}(q)=
\sum_{x\in \Lambda}q^{\langle x, x \rangle}
=\sum_{m=0}^{\scriptstyle \infty}\vert \Lambda_{m}\vert q^{m}.
\]
For example, when $\Lambda$ is the $E_{8}$-lattice
\begin{align*}
\Theta_{\Lambda}(q)=E_{4}(q)&=1+240\sum_{m=1}^{\infty}\sigma _{3}(m) q^{2m} \\
&=1 + 240 q^2 + 2160 q^4 + 6720 q^6 +\cdots, 
\end{align*}
where $\sigma _{3}(m)$ is a divisor function $\sigma _{3}(m)=\sum_{0<d|m}d^3$. 
Moreover the following theorem 
is known (see~\cite[Chap.~7]{SPLAG}).


\begin{thm}
\label{thm:Hecke}
If $\Lambda$ is an even unimodular lattice then 
\[
\Theta _{\Lambda}(q)\in \CC[E_{4}(q), \Delta _{24}(q)],
\]
where 
$\Delta _{24}(q)=q^{2}\prod _{m=1}^{\scriptstyle \infty}
(1-q^{2m})^{24}$. 
\end{thm}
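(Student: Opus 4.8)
The plan is to recognise $\Theta_\Lambda(q)$ as a classical modular form and then read off the claimed membership from the structure of the ring of modular forms together with a divisibility argument coming from $n \equiv 0 \pmod 8$. Throughout I put $q=e^{\pi i z}$ with $z$ in the upper half-plane, so that $q^2=e^{2\pi i z}$ as in the normalisation of $E_4(q)$ above, and I write $w=n/2$ where $n=\dim\Lambda$.

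First I would prove that $\Theta_\Lambda$ is a holomorphic modular form of weight $w$ for the full modular group $SL_2(\ZZ)$, by checking invariance under its two generators. Since $\Lambda$ is even, every norm $\la x,x\ra$ is an even integer, so $\Theta_\Lambda$ is a power series in $e^{2\pi i z}$ and is therefore invariant under $z\mapsto z+1$. For the inversion $z\mapsto -1/z$ I would apply the Poisson summation formula (Jacobi's transformation for lattice theta series), which gives
\[
\Theta_\Lambda(-1/z)=(\det\Lambda)^{-1/2}\,(z/i)^{w}\,\Theta_{\Lambda^*}(z).
\]
This is the one genuinely analytic step and the place where unimodularity enters: $\det\Lambda=1$ and $\Lambda^*=\Lambda$ collapse the right-hand side to $(z/i)^{w}\Theta_\Lambda(z)$. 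Because $n\equiv 0\pmod 8$ we have $w\equiv 0\pmod 4$, so $(1/i)^{w}=i^{-w}=1$ and the transformation becomes $\Theta_\Lambda(-1/z)=z^{w}\Theta_\Lambda(z)$ with trivial multiplier. Holomorphy on the upper half-plane follows from absolute convergence, and holomorphy at the cusp from the fact that the $q$-expansion has only nonnegative exponents with constant term $1$ (the zero vector). Hence $\Theta_\Lambda$ lies in the space $M_w(SL_2(\ZZ))$ of modular forms of weight $w$.

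Next I would invoke the structure theorem for modular forms on $SL_2(\ZZ)$ (see \cite[Chap.~7]{SPLAG}): the graded ring $\bigoplus_w M_w$ is the polynomial ring $\CC[E_4,E_6]$, where $E_6(q)$ is the weight-$6$ Eisenstein series. Thus $\Theta_\Lambda$ is a $\CC$-linear combination of monomials $E_4(q)^a E_6(q)^b$ with $4a+6b=w$. Here the divisibility $w\equiv 0\pmod 4$ pays off a second time: in each such monomial $6b\equiv 0\pmod 4$, hence $2b\equiv 0\pmod 4$ and $b$ is even. Writing $b=2c$ and using the identity $E_6(q)^2=E_4(q)^3-1728\,\Delta_{24}(q)$ (equivalently $\Delta_{24}=(E_4^3-E_6^2)/1728$), every monomial $E_4^a(E_6^2)^c$ rewrites as a polynomial in $E_4(q)$ and $\Delta_{24}(q)$. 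Therefore $\Theta_\Lambda(q)\in\CC[E_4(q),\Delta_{24}(q)]$, as claimed.

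The main obstacle is the inversion formula: everything else is either formal (the $z\mapsto z+1$ invariance) or a citation (the $\CC[E_4,E_6]$ structure and the short algebraic reduction to $b$ even). I would therefore spend the bulk of the argument on the Poisson-summation computation, being careful with the branch of $(z/i)^w$ and checking that the hypothesis $n\equiv 0\pmod 8$ is exactly what is needed both to keep the weight integral and to kill the multiplier.
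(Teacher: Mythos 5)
Your argument is correct and is precisely the standard proof of this classical result (Poisson summation to show $\Theta_\Lambda$ is a weight-$n/2$ modular form for $SL_2(\ZZ)$, then the structure theorem $\bigoplus_w M_w=\CC[E_4,E_6]$ together with $n\equiv 0\pmod 8$ forcing the $E_6$-exponent to be even, and $E_6^2=E_4^3-1728\,\Delta_{24}$). The paper gives no proof of its own but simply cites \cite[Chap.~7]{SPLAG}, where exactly this argument appears, so your proposal matches the intended derivation.
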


We now give a method to construct 
even unimodular lattices from Type~II codes, which
is called Construction A~\cite{BDHO}.
Let $\rho$ be a map from $\ZZ_{2k}$ to $\ZZ$ sending $0, 1, \ldots , k$ 
to $0, 1, \ldots , k$ and $k+1, \ldots , 2k-1$ to $1-k, \ldots , -1$, 
respectively. 
If $C$ is a  self-dual $\ZZ_{2k}$-code of length $n$, then 
the lattice 
\[
A_{2k}(C)=\frac{1}{\sqrt{2k}}\{\rho (C) +2k \ZZ^{n}\} 
\]
is an $n$-dimensional unimodular lattice, where 
\[
\rho (C)=\{(\rho (c_{1}), \ldots , \rho (c_{n}))\ 
\vert\ (c_{1}, \ldots , c_{n}) \in C\}. 
\]
The minimum norm of $A_{2k}(C)$ is $\min\{2k, d_{E}(C)/2k\}$.
Moreover, 
if $C$ is Type~II then the lattice $A_{2k}(C)$ 
is an even unimodular lattice~\cite{BDHO}.

The symmetrized weight enumerator of a $\ZZ_{2k}$-code
$C$ is 
\begin{align*}
\swe_{C}(x_{0}, x_{1}, \ldots , x_{k})=
\sum_{c\in C}x_{0}^{n_{0}(c)}x_{1}^{n_{1}(c)}\cdots 
x_{k-1}^{n_{k-1}(c)}x_{k}^{n_{k}(c)}, 
\end{align*}
where $n_{0}(c)$, $n_{1}(c)$, \ldots , 
$n_{k-1}(c)$, $n_{k}(c)$ are the numbers of 
$0, \pm 1, \ldots , \pm (k-1), k$ components of $c$, respectively~\cite{BDHO}. 
Then the theta series of $A_{2k}(C)$ can be found by replacing 
$x_{1}$, $x_{2}$, $\ldots$, $x_{k}$ by 
\begin{align*}
f_{0}=\sum_{x\in 2k\ZZ}q^{x^{2}/2k}, 
f_{1}=\sum_{x\in 2k\ZZ +1}q^{x^{2}/2k}, \ldots ,
f_{k}=\sum_{x\in 2k\ZZ +k}q^{x^{2}/2k}, 
\end{align*}
respectively.

\section{Proof of Theorem~\ref{main1}}
In this section, we give a proof of Theorem~\ref{main1}.
Our proof is an analogue of that 
of~\cite[Corollary 13]{Z4-BSBM} (see also~\cite{MOS75}).
We remark that 
in the proof of~\cite[Corollary 13]{Z4-BSBM}
$(\Delta /E_8^34)$ (p.~973, right, l.~$-7$)
should be $(t E_{4}^{3}/\Delta )$ and
$(4\ZZ)^8/2$ (p.~973, right, l.~$-5$) should be
$2\ZZ^8$.

\begin{proof}
Let $C$ be a Type~II $\ZZ_{2k}$-code of length $n$. Then 
the even unimodular lattice $A_{2k}(C)$ contains the sublattice 
$\Lambda _{0}=\sqrt{2k}\ZZ^{n}$ which has minimum norm $2k$.
We set $\Theta _{\Lambda_{0}}(q)=\theta _{0}$,
$n=8j$ and $j=3\mu +\nu$ ($\nu=0,1,2$), that is, $\mu=\lfloor n/24 \rfloor$.
In this proof, we denote $E_4(q)$ and $\Delta_{24}(q)$ by
$E_{4}$ and $\Delta$, respectively.
By Theorem~\ref{thm:Hecke}, 
the theta series of $A_{2k}(C)$ can be written as 
\begin{align*}
\Theta _{A_{2k}(C)}(q)=\sum_{s=0}^{\mu}a_{s}E_{4}^{j-3s}\Delta ^{s}
=\sum_{r\geq 0}\vert {A_{2k}(C)}_{r}\vert q^{r}
=\theta_{0}+\sum_{r\geq 1}\beta _{r}q^{r}. 
\end{align*}

Suppose that $d_{E}(C)\geq 4k(\mu +1)$. 
We remark that a codeword of Euclidean weight $4km$
gives a vector of norm $2m$ in $A_{2k}(C)$. 
Then we choose the $a_{0}, a_1, \ldots , a_{\mu}$ so that 
\begin{align*}
\Theta _{A_{2k}(C)}(q)=\theta_{0}+\sum_{r\geq 2(\mu+1)}\beta ^{\ast}_{r}q^{r}. 
\end{align*}

Here, we set $b_{2s}$ as 
$E_{4}^{-j}\theta_{0}=\sum_{s=0}^{\infty}b_{2s}(\Delta/E_{4}^{3})^s$. 
That is, $\theta_{0}=\sum_{s=0}^{\infty}b_{2s}E_{4}^{j-3s}\Delta^s$. 
Then 
\begin{align*}
\sum_{s=0}^{\mu}a_{s}E_{4}^{j-3s}\Delta ^{s}
=\Theta _{A_{2k}(C)}(q)
=\sum_{s=0}^{\infty}b_{2s}(\Delta/E_{4}^{3})^s
+\sum_{r\geq 2(\mu+1)}\beta ^{\ast}_{r}q^{r}. 
\end{align*}
Comparing the coefficients of $q^{i}$ $(0\leq i\leq 2\mu)$, we get 
$a_{s}=b_{2s}$ $(0\leq s\leq \mu)$. 
Hence we have 
\begin{align}
-\sum_{r\geq (\mu+1)}b_{2r}E_{4}^{j-3r}\Delta^r
=\sum_{r\geq 2(\mu+1)}\beta^{\ast}_{r}q^{r}. \label{eqn:b}
\end{align}
In (\ref{eqn:b}), comparing the coefficient of $q^{2(\mu+1)}$, 
we have
\[
\beta^{\ast}_{2(\mu +1)}=-b_{2(\mu +1)}. 
\]
All the series are in $q^{2}=t$, 
and B\"urman's formula shows that 
\[
b_{2s}=\frac{1}{s!}\frac{d^{s-1}}{dt^{s-1}}
\left(
\left(
\frac{d}{dt}(E_{4}^{-j}\theta _{0})\right)
(t E_{4}^{3}/\Delta )^{s}\right)_{\{t=0\}}. 
\]
Using the fact that $\theta _{0}=\theta_{1}^{j}$ where $\theta _{1}$ 
is the theta series of the lattice $\sqrt{2k}\ZZ^{8}$,
\begin{align*}
b_{2s}=\frac{-j}{s!}\frac{d^{s-1}}{dt^{s-1}}\left(
E_{4}^{3s-j-1}\theta _{1}^{j-1}(\theta_{1}E_{4}^{\prime}
-\theta_{1}^{\prime}E_{4})(t/\Delta)^{s}\right)_{\{t=0\}}, 
\end{align*}
where $f^{\prime}$ is the derivation of $f$ with respect to $t=q^{2}$. 

The condition that there is a codeword of Euclidean weight 
$4k(\mu +1)$ is equivalent to the condition 
$\beta^{\ast}_{2(\mu+1)}>0$. 
It is sufficient to show that the coefficients of 
$\theta_{1}^{j-1}(\theta_{1}E_{4}^{\prime}-\theta_{1}^{\prime}E_{4})$ 
are positive up to the exponent $\mu$ since $E_{4}$ and $1/\Delta$ 
have positive coefficients. 

By Proposition 3.4 in~\cite{BDHO},
there exists a Type~II $\ZZ_{2k}$-code of length $8$
for every $k$.
Hence let $C_{8}$ be a Type~II $\ZZ_{2k}$-code of length $8$. 
Then $A_{2k}(C_8)$ is the $E_{8}$-lattice. 
In addition,  we can write 
\begin{align*}
E_{4}=\swe_{C_{8}}(f_{0}, f_{1}, \ldots , f_{k})\ 
{\rm and}\ \theta _{1}=f_{0}^{8}. 
\end{align*}
Deriving 
\begin{align*}
E_{4}/{\theta_{1}}=\swe_{C_{8}}(1, f_{1}/f_{0}, \ldots , f_{k}/f_{0}), 
\end{align*}
we find 
\begin{align*}
\theta_{1}^{j-1}&(\theta_{1}E_{4}^{\prime}-\theta_{1}^{\prime}E_{4})\\
=&\frac{\partial \swe_{C_{8}}(f_{0}, f_{1}, \ldots , f_{k})}
{\partial x_{1}}
f_{0}^{8j-1}
(f_{0}f_{1}^{\prime}-f_{0}^{\prime}f_{1}) \\
&+\cdots +
\frac{\partial \swe_{C_{8}}(f_{0}, f_{1}, \ldots , f_{k})}
{\partial x_{k}}f_{0}^{8j-1}
(f_{0}f_{k}^{\prime}-f_{0}^{\prime}f_{k}). 
\end{align*}
Hence it is sufficient to show that $f_{0}^{8j-1}
(f_{0}f_{1}^{\prime}-f_{0}^{\prime}f_{1}), \ldots , 
f_{0}^{8j-1}
(f_{0}f_{k}^{\prime}-f_{0}^{\prime}f_{k})$ 
have positive coefficients up to $\mu$. 
We only consider the case $f_{0}^{8j-1}
(f_{0}f_{1}^{\prime}-f_{0}^{\prime}f_{1})$ 
and the other cases are similar. 
We have that
\begin{align*}
t(f_{0}f_{1}^{\prime}&-f_{0}^{\prime}f_{1})
=\sum_{x, y\in \ZZ}\frac{(1+2k y)^{2}-(2k x)^{2}}{4k}t^{((1+2k y)^{2}+(2k x)^{2})/4k},
\end{align*}
then
\begin{multline}\label{Eq:2}
tf_{0}^{s}(f_{0}f_{1}^{\prime}-f_{0}^{\prime}f_{1}) \\ 
=\sum_{x, y, x_{1}, \ldots , x_{s}\in \ZZ}
\frac{(1+2k y)^{2}-(2k x)^{2}}{4k}
\cdot t^{((1+2k y)^{2}+(2k x)^{2}+(2k x_{1})^{2}+\cdots +(2k x_{s})^{2})/4k}. 
\end{multline}
Fix one of the choices
$y$, $x$, $x_1,\ldots x_s \in \ZZ$ and define $l$
as follows:
\begin{align}\label{Eq:2-2}
l=(1+2k y)^{2}+(2k x)^{2}+(2k x_{1})^{2}+\cdots +(2k x_{s})^{2}.
\end{align}
Consider all permutations on the set \{$x$, $x_1, \ldots$, $x_s$\}.
As the sum of coefficients of $t^{l/4k}$ in the
right hand side of (\ref{Eq:2}) under these cases,
we have that some positive constant multiple by 
\begin{multline}\label{Eq:3}
\frac{(s+1)(1+2k y)^{2}-(2k x)^{2}-(2k x_{1})^{2}-\cdots -(2k x_{s})^{2}}{4k}
\\
=\frac{(s+2)(1+2k y)^{2}-l}{4k}.
\end{multline}
If $l < s +2$ then (\ref{Eq:3}) is positive.
Since we consider the case $s=8j-1$,
$l < n+1$.
Hence if the exponent $l/4k$ of $t$ is less than
$(n+1)/4k$ then (\ref{Eq:3}) is positive.
This means that if $\mu < (n+1)/4k$ then (\ref{Eq:3}) is positive.
This condition $\mu < (n+1)/4k$
is satisfied since $k\leq 6$.
Thus for any choice $y$, $x$, $x_1,\ldots x_s$,
(\ref{Eq:3}) is positive.
The coefficient of $t^{l/4k}$ in the
right hand side of (\ref{Eq:2})
is the sum of those coefficients (\ref{Eq:3}), that is, positive.
This completes the proof of Theorem~\ref{main1}.


\end{proof}

\section{Extremal Type~II $\ZZ_{2k}$-codes}
\label{Sec:code}
An extremal Type~II $\ZZ_{2k}$-code of length $8m$ 
is currently known for the cases $(k,m)$ listed in the
second column in 
Table~\ref{Tab:Exi}.
In this section,
an extremal Type~II $\ZZ_{2k}$-code of length $8m$
is constructed for the first time
for the cases $(k,m)$ listed in the
last column in 
Table~\ref{Tab:Exi}.

\begin{table}[thb]
\caption{Existence of extremal Type~II $\ZZ_{2k}$-codes of length $8m$}
\label{Tab:Exi}
\begin{center}
{\footnotesize
\begin{tabular}{c|ll|ll}
\noalign{\hrule height0.8pt}
$k$ & \multicolumn{2}{c|}{$m$ (known cases)}
&\multicolumn{2}{c}{$m$ (new cases)}\\
\hline
$1$ & $1,2,\ldots,8,10,11,13,14,17$ 
&\cite[p.~194]{SPLAG}, \cite{H112}   & \\
$2$ & $1,2,\ldots,8$ &\cite{Z4-BSBM}, \cite{Cal-S}, \cite{H-Z4-56-64} & \\
$3$ & $1,2,\ldots,8$ &\cite{Chapman}, \cite{HKO}   & \\
$4$ & $1,2,\ldots,6$ &\cite{Chapman}, \cite{GH05}  
& 7, 8 & ($C_{8,56}$, $C_{8,64}$) \\
$5$ & $1,2,\ldots,5$ &\cite{Chapman}, \cite{GH05}  
& 6, 7, 8 & (Proposition \ref{Prop:Z10}, $C_{10,56}$, $C_{10,64}$) \\
$6$ & $1,2,3$          &\cite{Chapman}               
& 4, 5, 6, 7, 8 & (Proposition \ref{Prop:Z12})\\
\noalign{\hrule height0.8pt}
   \end{tabular}
}
\end{center}
\end{table}


Let $A$ and $B$ be $n \times n$ negacirculant matrices,
that is, $A$ and $B$ have the following form
\[
\left( \begin{array}{ccccc}
r_0&r_1&r_2& \cdots &r_{n-1} \\
-r_{n-1}&r_0&r_1& \cdots &r_{n-2} \\
-r_{n-2}&-r_{n-1}&r_0& \cdots &r_{n-3} \\
\vdots &\vdots & \vdots && \vdots\\
-r_1&-r_2&-r_3& \cdots&r_0
\end{array}
\right).
\]
If  $AA^T+BB^T=-I_n$, then it is trivial that
\begin{equation} \label{Eq:GM}
\left(
\begin{array}{ccc@{}c}
\quad & {\Large I_{2n}} & \quad &
\begin{array}{cc}
A & B \\
-B^T & A^T
\end{array}
\end{array}
\right)
\end{equation}
generates a self-dual code
where $I_{n}$ denotes the identity matrix of order $n$
and  $A^T$ is the transpose of $A$.

\begin{table}[thb]
\caption{New extremal Type~II $\ZZ_{2k}$-codes}
\label{Tab:GM}
\begin{center}
{\footnotesize
\begin{tabular}{c|l|l}
\noalign{\hrule height0.8pt}
Codes & \multicolumn{1}{c|}{$r_A$}&\multicolumn{1}{c}{$r_B$}\\
\hline
$C_{8,56}$ & 
$(0,0,4,3,4,1,6,3,1,1,1,1,1,2)$&$(1,1,0,0,0,0,0,3,2,0,0,0,0,4)$\\
$C_{8,64}$ & 
$(0,0,0,2,0,7,3,2,0,0,5,3,1,4,0,2)$&$(0,0,1,0,0,0,0,1,7,1,3,0,1,2,2,0)$\\
\hline
$C_{10,56}$ &
$(0,0,0,2,5,1,4,1,2,0,5,0,4,1)$&$(0,0,0,0,0,1,0,5,7,0,3,9,1,0)$ \\
$C_{10,64}$ &
$(0,0,4,3,2,0,0,1,9,0,0,0,9,1,2,0)$&$(1,3,0,1,0,6,9,4,6,2,0,5,0,0,2,3)$\\
\hline
$C_{12,32}$ &$(0,0,7,6,0,1,7,10)$&$(0,1,0,4,1,0,3,11)$\\
$C_{12,40}$ &$(0,0,0,2,1,10,5,9,2,10)$&$(0,1,0,1,0,0,11,1,0,4)$\\
$C_{12,56}$ &
$(2,11,2,2,4,11,0,5,0,0,6,1,5,7)$&$(1,0,5,3,0,8,0,2,0,7,7,0,0,4)$\\
\noalign{\hrule height0.8pt}
   \end{tabular}
}
\end{center}
\end{table}


Using the above construction method, we have found
extremal Type~II $\ZZ_{8}$-codes $C_{8,56}$ and $C_{8,64}$
of lengths $56$ and $64$, respectively, and
extremal Type~II $\ZZ_{10}$-codes $C_{10,56}$ 
and $C_{10,64}$ of lengths $56$ and $64$, respectively.
The first rows $r_A$ and $r_B$ of the matrices $A$ and $B$
in their generator matrices (\ref{Eq:GM}) are listed in 
Table~\ref{Tab:GM}.
Hence we have the following:

\begin{prop}\label{Prop:5664}
For lengths $56$ and $64$,
there is an extremal Type~II $\ZZ_{2k}$-code when
$k=4$ and $5$.
\end{prop}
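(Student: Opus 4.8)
The plan is to verify Proposition~\ref{Prop:5664} by exhibiting, for each of the four pairs
$(k,m)=(4,7),(4,8),(5,7),(5,8)$, an explicit Type~II $\ZZ_{2k}$-code of the stated length that attains the bound of Theorem~\ref{main1} with equality. Concretely, for length $8m=56$ I would take $n=14$ in the construction~(\ref{Eq:GM}), and for length $8m=64$ I would take $n=16$; in each case the code is generated by the $2n\times 4n$ matrix
\[
\left(
\begin{array}{ccc@{}c}
\quad & I_{2n} & \quad &
\begin{array}{cc}
A & B \\
-B^T & A^T
\end{array}
\end{array}
\right),
\]
where $A$ and $B$ are the $n\times n$ negacirculant matrices whose first rows $r_A$ and $r_B$ are the vectors listed in Table~\ref{Tab:GM} (reduced modulo $2k$, so the entries of $C_{8,56},C_{8,64}$ are read in $\ZZ_8$ and those of $C_{10,56},C_{10,64}$ in $\ZZ_{10}$).

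First I would check self-duality. Because the generator matrix has the systematic form $(\,I_{2n}\mid M\,)$ with $M=\left(\begin{smallmatrix}A&B\\-B^T&A^T\end{smallmatrix}\right)$, the code is self-dual over $\ZZ_{2k}$ precisely when $MM^T\equiv -I_{2n}\pmod{2k}$, and a direct block computation shows this reduces to the single negacirculant identity $AA^T+BB^T\equiv -I_n\pmod{2k}$ together with the commutation relation $AB^T=BA^T$, which holds automatically for negacirculant $A,B$ since they lie in the commutative ring generated by the negacyclic shift. So the verification collapses to confirming $AA^T+BB^T=-I_n$ in $\ZZ_{2k}$ for each listed pair $(r_A,r_B)$ --- a finite modular matrix check that I would delegate to {\sc Magma}. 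Next I would confirm the Type~II property: by definition this requires all Euclidean weights to be divisible by $4k$, which for these lengths $n\equiv0\pmod 8$ is guaranteed once the code is self-dual, or can be read off directly from the symmetrized weight enumerator.

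The substantive step is the minimum-weight computation. By Theorem~\ref{main1}, extremality for $m=7$ means $d_E(C)=4k(\lfloor 56/24\rfloor+1)=4k\cdot 3=12k$, and for $m=8$ it means $d_E(C)=4k(\lfloor 64/24\rfloor+1)=4k\cdot 3=12k$ as well; so in all four cases I must verify $d_E=12k$, i.e.\ $d_E=48$ for $k=4$ and $d_E=60$ for $k=5$. Equivalently, via Construction~A the associated even unimodular lattice $A_{2k}(C)$ must have minimum norm $\min\{2k,\,d_E(C)/2k\}=6$, the extremal value for dimensions $56$ and $64$. I would establish $d_E\ge 12k$ by computing the minimum Euclidean weight of each code directly in {\sc Magma}; the upper bound $d_E\le 12k$ is already guaranteed by Theorem~\ref{main1}, so equality follows once a codeword of Euclidean weight exactly $12k$ is exhibited (which the minimum-weight computation produces automatically).

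The main obstacle is computational rather than conceptual: a brute-force search over all $ (2k)^{2n}$ codewords is infeasible (e.g.\ $8^{32}$ or $10^{32}$ words), so the minimum-Euclidean-weight calculation must exploit the structure of $\ZZ_{2k}$-codes --- lifting to the Construction~A lattice and invoking lattice short-vector enumeration, or using coset/information-set arguments to prune the search. Confirming that no nonzero codeword has Euclidean weight below $12k$ is therefore the delicate point, and it is precisely here that I rely on the {\sc Magma} computations referenced in the paper; everything else (self-duality and the Type~II condition) is a routine finite modular check.
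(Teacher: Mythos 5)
Your proposal follows essentially the same route as the paper: the paper's proof of Proposition~\ref{Prop:5664} consists precisely of exhibiting the negacirculant generator matrices (\ref{Eq:GM}) with first rows $r_A,r_B$ from Table~\ref{Tab:GM} and verifying self-duality, the Type~II condition, and the minimum Euclidean weight $d_E=12k$ by computer in {\sc Magma}. One small correction: self-duality together with $8\mid n$ does \emph{not} by itself guarantee the Type~II condition (a self-dual $\ZZ_{2k}$-code only has all Euclidean weights divisible by $2k$, and Type~I self-dual codes of length divisible by $8$ exist), so the divisibility of all Euclidean weights by $4k$ must be checked directly, as your fallback via the symmetrized weight enumerator correctly does.
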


An $n$-dimensional even unimodular lattice is
called {\em extremal} if it has minimum norm
$2\lfloor n/24\rfloor+2$.
The existence of an extremal Type~II $\ZZ_{10}$-code
of length $48$ is established
by considering the existence of a $10$-frame in
some extremal even unimodular lattice.
Recall that 
a set $\{f_1, \ldots, f_{n}\}$ of $n$ vectors $f_1, \ldots, f_{n}$ in an
$n$-dimensional unimodular lattice $L$ with
$ \langle f_i, f_j \rangle = \ell \delta_{i,j}$
is called an {\em $\ell$-frame} of $L$,
where $\delta_{i,j}$ is the Kronecker delta.
It is known that an even unimodular lattice $L$
contains a $2k$-frame if and only if there is 
a Type~II $\ZZ_{2k}$-code $C$ 
such that $A_{2k}(C) \simeq L$.

\begin{prop}\label{Prop:Z10}
There is an extremal Type~II $\ZZ_{10}$-code of length $48$.
\end{prop}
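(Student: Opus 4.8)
The plan is to exploit the criterion stated just before the proposition: an even unimodular lattice $L$ contains a $2k$-frame (here a $10$-frame) if and only if there is a Type~II $\ZZ_{10}$-code $C$ with $A_{2k}(C)\simeq L$. Since a Type~II $\ZZ_{10}$-code of length $48$ meeting the bound~(\ref{Eq:B}) is extremal precisely when its Euclidean weight equals $4k\lfloor 48/24\rfloor+4k=20\cdot 2+20=60$, and since the minimum norm of $A_{2k}(C)$ is $\min\{2k,d_E(C)/2k\}=\min\{10,d_E(C)/10\}$, an extremal code corresponds under Construction~A to an even unimodular lattice of dimension $48$ and minimum norm $6$, i.e.\ an \emph{extremal} even unimodular lattice (as $2\lfloor 48/24\rfloor+2=6$). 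Thus it suffices to produce a $10$-frame in some $48$-dimensional extremal even unimodular lattice $L$; the resulting code $C$ with $A_{10}(C)\simeq L$ is then automatically Type~II and extremal.

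First I would fix a concrete extremal even unimodular lattice of dimension $48$ — there are several known ones, for instance $P_{48p}$, $P_{48q}$, or the lattice built from an extremal code — and work with an explicit Gram matrix. Then I would search for $48$ mutually orthogonal vectors $f_1,\ldots,f_{48}$ in $L$, each of norm $10$, satisfying $\langle f_i,f_j\rangle=10\delta_{i,j}$. Equivalently, I would look for a set of vectors in the shell $L_{10}$ of norm~$10$ that are pairwise orthogonal and span; the existence of such a frame is the entire content to be verified. Once a $10$-frame is exhibited, the characterization quoted above immediately yields the desired Type~II $\ZZ_{10}$-code, and extremality is guaranteed by the minimum-norm computation in the previous paragraph, so no separate weight enumeration is needed.

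The hard part will be the frame search itself: the shell $L_{10}$ can be large, and locating $48$ pairwise-orthogonal norm-$10$ vectors is a nontrivial combinatorial problem that in practice is carried out by computer. I would organize the search by first computing $L_{10}$ (or enough of it), then growing an orthogonal set greedily while backtracking, exploiting any automorphisms of $L$ to prune the search tree and to fix the first few frame vectors without loss of generality. Given that the paper performs all such computations in {\sc Magma}~\cite{Magma}, I expect the cleanest route is to delegate the existence of the $10$-frame to an explicit machine search in a well-chosen extremal lattice, after which the algebraic consequences (Type~II-ness, extremality via Construction~A) follow formally from the results already established in the excerpt.
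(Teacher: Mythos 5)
Your reduction is exactly the paper's: an extremal Type~II $\ZZ_{10}$-code of length $48$ exists if and only if some $48$-dimensional extremal even unimodular lattice contains a $10$-frame, via the frame--code correspondence and the formula $\min A_{2k}(C)=\min\{2k,d_E(C)/2k\}$. That part of your argument is correct and complete. The gap is in the step you yourself flag as ``the entire content to be verified'': you never actually produce the $10$-frame, and your plan --- enumerate the norm-$10$ shell of a fixed known extremal lattice such as $P_{48p}$ or $P_{48q}$ and grow a pairwise-orthogonal set of $48$ vectors by backtracking --- is not a proof, and is likely not even a feasible computation (the relevant shells of a $48$-dimensional extremal lattice contain on the order of $10^{7}$--$10^{9}$ vectors, and there is no a priori guarantee that the particular lattice you pick admits a $10$-frame at all). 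As written, the existence claim rests on an unexecuted and possibly unsuccessful search.

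The paper avoids this search entirely by arranging for the frame to exist by construction. It takes an explicit self-dual $\FF_5$-code $C_{5,48}$ of length $48$ (double negacirculant, generator matrix of the form (\ref{Eq:GM})), so that $A_5(C_{5,48})=\frac{1}{\sqrt{5}}\{x\in\ZZ^{48}\mid x \bmod 5\in C_{5,48}\}$ is an odd unimodular lattice visibly containing the $5$-frame $\{\sqrt{5}e_1,\ldots,\sqrt{5}e_{48}\}$. Pairing these up, $F=\{\sqrt{5}(e_{2i-1}\pm e_{2i})\}$ is a set of $48$ pairwise orthogonal vectors of norm $10$ lying in the \emph{even sublattice} of $A_5(C_{5,48})$; since an even unimodular neighbor $L_{48}$ of $A_5(C_{5,48})$ contains that same index-$2$ sublattice, $F$ is automatically a $10$-frame of $L_{48}$. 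The only computer verification needed is that $L_{48}$ is extremal (minimum norm $6$), which is a single minimum-norm check rather than a combinatorial frame search. So your approach would need either this kind of indirect construction or a concrete, successfully completed search to close; the formal consequences you draw afterwards (Type~II-ness and extremality of the resulting code) are fine.
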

\begin{proof}
Let $C_{5,48}$ be the $\FF_5$-code with generator 
matrix (\ref{Eq:GM}) where the first rows $r_A$ and $r_B$ of 
the matrices $A$ and $B$ are
\[
r_A=(2,3,0,2,2,3,2,2,3,2,2,0) \text{ and }
r_B=(3,0,4,4,0,1,0,0,4,0,0,1),
\]
respectively.
Then this code $C_{5,48}$ is a self-dual code and
the lattice $A_5(C_{5,48})=\frac{1}{\sqrt{5}}\{x \in \ZZ^{48}
\mid x \pmod 5 \in C_{5,48}\}$ is an
odd unimodular lattice.
The lattice has theta series
$1 + 393216 q^5 + 26201600q^6 + \cdots$.
We have verified that 
$A_5(C_{5,48})$ has an even unimodular neighbor $L_{48}$
which is extremal.
Clearly the lattice $A_5(C_{5,48})$ contains 
the $5$-frame
$\{\sqrt{5}e_1,\sqrt{5}e_2,\ldots,\sqrt{5}e_{48}\}$
where $e_i$ ($i=1,2,\ldots,{48}$) denotes the $i$-th unit vector
$(\delta_{i,1},\delta_{i,2},\ldots,\delta_{i,48})$
of length $48$.
Then the set
$F=\{\sqrt{5}(e_{2i-1} \pm e_{2i}) \mid i=1,2,\ldots,24 \}$
is a $10$-frame of the even sublattice of $A_5(C_{5,48})$.
Hence $F$ is also a $10$-frame of the extremal
even unimodular neighbor $L_{48}$.
Therefore there is a Type~II $\ZZ_{10}$-code $C_{10,48}$ of
length $48$ such that 
$A_{10}(C_{10,48}) \simeq L_{48}$.
Moreover, the code $C_{10,48}$ must be extremal since 
the lattice $L_{48}$ is extremal.
\end{proof}

Similar to the above proposition, 
the existence of $12$-frames in
extremal even unimodular lattices yields that 
of some extremal Type~II $\ZZ_{12}$-codes.

\begin{prop}\label{Prop:Z12}
There is an extremal Type~II $\ZZ_{12}$-code of length
$8m$ for $m=4,5,6,7$ and $8$.
\end{prop}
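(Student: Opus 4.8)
The plan is to reduce everything to the frame--code correspondence recalled above, exactly as in the proof of Proposition~\ref{Prop:Z10}: if an extremal even unimodular lattice $L$ of dimension $8m$ contains a $12$-frame, then there is a Type~II $\ZZ_{12}$-code $C$ with $A_{12}(C)\simeq L$, and this $C$ is automatically extremal for $m\le 8$. Indeed, the minimum norm of $A_{12}(C)$ equals $\min\{12,\,d_E(C)/12\}$, while the extremal minimum norm $2\lfloor n/24\rfloor+2$ of $L$ is at most $6<12$ in this range; hence $d_E(C)/12=2\lfloor n/24\rfloor+2$, that is $d_E(C)=24(\lfloor n/24\rfloor+1)$, the extremal Euclidean weight for $k=6$. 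So it suffices to exhibit, for each $m\in\{4,5,6,7,8\}$, an extremal even unimodular lattice of dimension $8m$ carrying a $12$-frame.

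First I would use the factorization $12=2\cdot 6$ and the evenness of $6$. By Table~\ref{Tab:Exi} there is an extremal Type~II $\ZZ_6$-code $C_6$ of length $8m$ for every $m\le 8$, so $L=A_6(C_6)$ is an extremal even unimodular lattice of dimension $8m$. Since $\sqrt{6}\,\ZZ^{8m}\subseteq A_6(C_6)$, this $L$ contains the standard $6$-frame $\{\sqrt{6}\,e_1,\ldots,\sqrt{6}\,e_{8m}\}$, and the doubling trick of Proposition~\ref{Prop:Z10} applies verbatim: the $8m$ pairwise orthogonal norm-$12$ vectors $\sqrt{6}\,(e_{2i-1}\pm e_{2i})$, $i=1,\ldots,4m$, again lie in $L$ and thus form a $12$-frame of $L$. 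By the first paragraph this yields an extremal Type~II $\ZZ_{12}$-code of length $8m$ simultaneously for all five values of $m$.

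An explicit alternative, and the source of the entries in Table~\ref{Tab:GM}, is to search directly among the self-dual codes with generator matrix (\ref{Eq:GM}) coming from $n\times n$ negacirculant blocks $A,B$ with $AA^T+BB^T=-I_n$, as was done for lengths $56$ and $64$ over $\ZZ_8$ and $\ZZ_{10}$. For the lengths where such a search succeeds one records the first rows $r_A,r_B$ (giving $C_{12,32}$, $C_{12,40}$, $C_{12,56}$) and certifies by computer that the code is self-dual, has all Euclidean weights divisible by $24$, and attains minimum Euclidean weight $24(\lfloor n/24\rfloor+1)$.

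The hard part is not the existence of the frame but the extremality certificate. Along the frame--doubling route extremality is forced once an extremal Type~II $\ZZ_6$-code is available, so the only genuine input is Table~\ref{Tab:Exi}, together with the easy observation that the minimum-norm transfer is valid precisely because $2\lfloor n/24\rfloor+2<12$ for $m\le 8$. Along the direct route the real obstacle is the computational one: locating negacirculant first rows whose Construction~A lattice has the correct minimum norm, and then verifying extremality by an exhaustive minimum Euclidean weight (equivalently, minimum-norm) computation in {\sc Magma}.
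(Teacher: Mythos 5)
Your proposal follows essentially the same route as the paper's own proof: take the extremal Type~II $\ZZ_6$-codes of length $8m$ from Table~\ref{Tab:Exi}, observe that the extremal even unimodular lattice $A_6(C_{6,8m})$ contains the $12$-frame $\{\sqrt{6}(e_{2i-1}\pm e_{2i})\}$, and invoke the frame--code correspondence to produce the extremal Type~II $\ZZ_{12}$-code. The only difference is that you spell out the minimum-norm transfer argument ($2\lfloor n/24\rfloor+2<12$ for $m\le 8$) that the paper leaves implicit, and you mention the direct negacirculant search, which the paper relegates to a remark; both additions are correct.
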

\begin{proof}
It is known that 
there is an extremal Type~II $\ZZ_{6}$-code of length
$8m$ for $m=4,5,6,7$ and $8$ 
(see Table \ref{Tab:Exi}).
We denote these codes by $C_{6,8m}$ $(m=4,5,6,7 \text{ and } 8)$, 
respectively.
Since $C_{6,8m}$ is an extremal Type II code,
the lattice $A_6(C_{6,8m})$ is an extremal 
even unimodular lattice
for $m=4,5,6,7 \text{ and } 8$.
Moreover, clearly the lattice $A_6(C_{6,8m})$ contains 
the $6$-frame
$\{\sqrt{6}e_1,\sqrt{6}e_2,\ldots,\sqrt{6}e_{8m}\}$
where $e_i$ denotes the $i$-th unit vector of length $8m$.
Then the set
$\{\sqrt{6}(e_{2i-1} \pm e_{2i}) \mid i=1,2,\ldots,4m \}$
is a $12$-frame of $A_6(C_{6,8m})$.
Hence there is a Type~II $\ZZ_{12}$-code $N_{8m}$ of
length $8m$ such that 
$A_{12}(N_{8m}) \simeq A_6(C_{6,8m})$.
Moreover, the code $N_{8m}$ must be extremal since 
the lattice $A_6(C_{6,8m})$ is extremal.
\end{proof}
\begin{rem}
Similar to Proposition \ref{Prop:5664},
by considering generator matrices (\ref{Eq:GM}),
we have found extremal Type~II $\ZZ_{12}$-codes 
$C_{12,32}$, $C_{12,40}$ and $C_{12,56}$ 
of lengths $32,40$ and $56$, respectively where 
the first rows $r_A$ and $r_B$ of the matrices $A$ and $B$
in  (\ref{Eq:GM}) are listed
in 
Table~\ref{Tab:GM}.
\end{rem}

Together with the known results on the existences of extremal
Type II codes 
(see Table \ref{Tab:Exi}), 
Propositions \ref{Prop:5664}, \ref{Prop:Z10} and \ref{Prop:Z12}
give Theorem \ref{Thm:E}.

\bigskip
\noindent
{\bf Acknowledgment.}
The authors would like to thank Kenichiro Tanabe for
useful discussions, and 
Koichi Betsumiya
for helpful conversations on the construction of
extremal Type~II codes.


\end{document}